\title{Alternative Jacobi Polynomials\\
and Orthogonal Exponentials
}
\author{Vladimir S. Chelyshkov\thanks{Lane College, 545 Lane Ave., Jackson, TN 38301 
({\tt vchelyshkov@lanecollege.edu}).}}
\begin{document}

\maketitle

\begin{abstract}
Sequences of orthogonal polynomials that are alternative to the Jacobi polynomials on the interval $[0,1]$ are defined and their properties are established. An $(\alpha,\beta)$-parameterized system of orthogonal polynomials of the exponential function on the semi-axis $[0, \infty)$ is presented. Two subsystems of the alternative Jacobi polynomials, as well as  orthogonal exponential polynomials are described. Two parameterized systems of discretely almost orthogonal functions on the interval $[0,1]$ are introduced.
\end{abstract}

\begin{keywords}
alternative orthogonal polynomials, recurrence relations, orthogonal polynomials of the exponential function, discretely almost orthogonal functions
\end{keywords}

\begin{AMS}
33C45
\end{AMS}

\pagestyle{myheadings}
\thispagestyle{plain}
\markboth{V.~S.~CHELYSHKOV}{ALTERNATIVE JACOBI POLYNOMIALS}

\section{Introduction}

The concept of inverse orthogonalization of a sequence of functions emerged from the study of spectral methods for numerical simulation of near-wall turbulent flows \cite{ NearWall,VC2}. Turbulence in the boundary layer near a flat plate is an intensive physical phenomenon that exhibits an exponential decay of disturbances far from the wall. Thus, the sequence of exponential functions $\{e^{-ky}\}_{k=1}^n, y\in[0,\infty)$, being orthogonalized, can be employed for numerical simulation of the phenomenon. It was noticed that applying the alternative algorithm of inverse orthogonalization to the sequence of exponential functions is as constructive as applying the direct orthogonalization algorithm to the sequence of monomials $\{x^{k}\}_{k=0}^n, x \in [0,1]$, and the bidirectional algorithm of orthogonalization was introduced  in \cite{VC1} for defining systems of orthogonal exponential polynomials on the semi-axis. 

Later, the alternative Legendre polynomials were presented, and two associated quadrature rules were obtained \cite{VC4}. 

The alternative Jacobi polynomials are described in this paper. By taking
advantage of the bidirectional orthogonalization we establish general properties of the polynomials.
We show that the shifted Jacobi polynomials corresponding to the weight
function with a translated parameter are a special subset of the alternative Jacobi polynomials.
Also, we consider an exceptional case of singular orthogonality; this feature takes place
in some of the alternative systems.

We find that  the alternative Jacobi polynomials in an exponential form are the system of functions on the semi-axis $[0,\infty)$ that has the same algorithmic capacity as the Jacobi polynomials have on the interval $[-1,1]$.
Such a deduction is illustrated with two special subsystems of the exponential polynomials possessing nice properties.

The orthogonal exponential polynomials on the semi-axis may serve a useful purpose in approximation of a continuous function on a closed interval. Following \cite{VC3}, we introduce two parameterized systems of discretely almost orthogonal functions on the interval $[0,1]$.  The systems are constituted of the orthogonal exponential polynomials and unity. For the second system, supplementary computations are required to meet the definition of the functions, and properties of such a special construction have not been studied. 

To obtain these results we followed the classical theory of orthogonal polynomials. Different aspects of the theory that we were inspired by are represented in monographs \cite{Sego, Nik, Ismail}.


\section{Construction of an $\mbox{\boldmath $(\alpha,\beta)$}$-Parameterized Alternative Orthogonal Polynomials}

Let $n$ be a fixed whole number and
\begin{equation}   
\mbox{\boldmath ${\cal P}$}_{n}^{(\alpha,\beta)}(x)=\{{\cal P}_{nk}^{(\alpha,\beta)}(x)\}_{k=n}^{0}
\label{jd}
\end{equation}
is the system of polynomials defined by Rodrigues' type formula as
\begin{equation} 
{\cal P}_{nk}^{(\alpha,\beta)}(x)=\frac{x^{-\alpha-k-1}(1-x)^{-\beta}}{(n-k)!}\frac{{\mbox{d}}^{n-k}}{{\mbox{d}}x^{n-k}}(x^{\alpha+n+k+1}(1-x)^{\beta+n-k})
\label{j1}
\end{equation}
with $\alpha>-1,\quad \beta>-1$.
From (\ref{j1}) it immediately follows  that 
\[
{\cal P}_{nk}^{(\alpha,\beta)}(x)=\frac{1}{(n-k)!}\sum_{j=0}^{n-k}(-1)^j
\frac{(n-k)_{j}}{j!}
(\alpha+n+k+1)_{n-k-j}(\beta+n-k)_{j}x^{k+j}(1-x)^{n-k-j};
\]
notation $(a)_{m}$ represents the falling factorial. The last formula contains the term of lowest order in $x$ such that
\begin{equation}
{\cal P}_{nk}^{(\alpha,\beta)}(x)=\frac{\Gamma(\alpha+n+k+2)}{(n-k)!\Gamma(\alpha+2k+2)}x^{k}+ ...,
\label{j1aa}
\end{equation}
where $\Gamma (z)$ is the gamma function.

\begin{lemma}
For $0\leq k < l \leq n$
\begin{equation}
\int_0^1x^\alpha(1-x)^\beta {\cal P}_{nk}^{(\alpha,\beta)}(x)x^{l}{\rm d}x=0.
\label{j1aaa}
\end{equation}
\end{lemma}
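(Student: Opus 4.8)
The plan is to substitute the Rodrigues-type representation (\ref{j1}) into the integral and then integrate by parts. Abbreviate $g(x)=x^{\alpha+n+k+1}(1-x)^{\beta+n-k}$, so that (\ref{j1}) reads
\[
x^{\alpha}(1-x)^{\beta}{\cal P}_{nk}^{(\alpha,\beta)}(x)=\frac{1}{(n-k)!}\,x^{-k-1}g^{(n-k)}(x).
\]
Hence the left-hand side of (\ref{j1aaa}) equals $\dfrac{1}{(n-k)!}\displaystyle\int_{0}^{1}x^{\,l-k-1}\,g^{(n-k)}(x)\,{\rm d}x$. Since $k<l\le n$, the exponent $N:=l-k-1$ is an integer with $0\le N\le n-k-1$; in particular $x^{N}$ is a genuine polynomial of degree $N$, and the integral is finite because $\alpha,\beta>-1$. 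The inequality $N\le (n-k)-1$ is the crucial one: it says that the number of derivatives carried by $g$ exceeds $N$, so the integration by parts can be carried out without ever needing a negative-order derivative of $g$.

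Next I would integrate by parts $N$ times (each occurring antiderivative of $g$, namely $g^{(n-k-1)},\dots,g^{(n-l)}$, is of nonnegative order, so this is legitimate). Because $x^{N}$ is annihilated after enough differentiations, no leftover integral survives, and one is left with the boundary sum
\[
\frac{1}{(n-k)!}\sum_{j=0}^{N}(-1)^{j}\,\frac{N!}{(N-j)!}\Big[x^{\,N-j}\,g^{(n-k-1-j)}(x)\Big]_{0}^{1}.
\]
Everything therefore reduces to checking that each bracketed term vanishes at both endpoints.

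This is where the hypotheses enter, and it is the only point requiring attention — essentially bookkeeping of exponents. Near $x=0$ one has $g^{(m)}(x)=x^{\alpha+n+k+1-m}\varphi_{m}(x)$ with $\varphi_{m}$ analytic at $0$; for $m=n-k-1-j$, $0\le j\le N$, the orders $m$ range over $n-l\le m\le n-k-1$, and a short computation gives $x^{N-j}g^{(m)}(x)\sim x^{\,\alpha+l+k+1}$, where $\alpha+l+k+1>0$ because $\alpha>-1$, $l\ge1$, $k\ge0$; so the term vanishes at $0$. Near $x=1$ one has $g^{(m)}(x)=(1-x)^{\beta+n-k-m}\psi_{m}(x)$ with $\psi_{m}$ analytic at $1$, and $\beta+n-k-m\ge\beta+1>0$ whenever $m\le n-k-1$; since $x^{N-j}$ stays bounded there, the term vanishes at $1$ as well. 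Thus every term in the sum is zero, which is precisely (\ref{j1aaa}).

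The computation is mechanical; the main obstacle, such as it is, is simply verifying that the number of derivatives stripped from $g$ never exceeds what the exponents $\alpha+n+k+1$ (at $x=0$) and $\beta+n-k$ (at $x=1$) can absorb. The constraints $l\le n$ and $k<l$, together with $\alpha,\beta>-1$, are exactly what make this work, and it is worth recording that these same inequalities are what force $N=l-k-1$ into the admissible range $0\le N\le n-k-1$.
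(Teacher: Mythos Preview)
Your proof is correct and follows exactly the route the paper indicates (``substitute (\ref{j1}) into (\ref{j1aaa}) and integrate by parts''): you insert the Rodrigues representation, reduce to $\int_0^1 x^{\,l-k-1}g^{(n-k)}(x)\,{\rm d}x$, strip derivatives off $g$ until only boundary terms remain, and verify these vanish using $\alpha,\beta>-1$. One cosmetic slip: you say ``integrate by parts $N$ times'' but then list $N{+}1$ antiderivatives $g^{(n-k-1)},\dots,g^{(n-l)}$ and (correctly) display $N{+}1$ boundary terms in the sum $\sum_{j=0}^{N}$; the count should be $N{+}1$.
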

\begin{proof}
Substituting (\ref{j1}) to (\ref{j1aaa}) and integrating by parts we confirm (\ref{j1aaa}).
\qquad\end{proof}

\begin{theorem}
The alternative Jacobi polynomials ${\cal P}_{nk}^{(\alpha,\beta)}(x)$ satisfy
\begin{equation}
\int_0^1 x^\alpha(1-x)^\beta  {\cal P}_{nk}^{(\alpha,\beta)}(x){\cal P}_{nl}^{(\alpha,\beta)}(x)
{\rm d}x=h_{nk}^{(\alpha,\beta)}\delta_{kl},
\label{j0}
\end{equation}
\begin{equation}
h_{nk}^{(\alpha,\beta)}=\frac{1}{\alpha+2k+1}\frac{\Gamma(\alpha+n+k+2)\Gamma(\beta+n-k+1)}{(n-k)!\Gamma(\alpha+\beta+n+k+2)}.
\label{j2}
\end{equation}
\end{theorem}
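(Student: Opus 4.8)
The plan is to verify (\ref{j0}) in the two cases $k\ne l$ and $k=l$ separately. The off-diagonal case follows directly from Lemma (\ref{j1aaa}) once one observes that the monomial expansion of ${\cal P}_{nk}^{(\alpha,\beta)}$ displayed above exhibits it as a polynomial all of whose terms $x^{m}$ have $k\le m\le n$: the lowest is $x^{k}$ by (\ref{j1aa}), and no term $x^{k+j}(1-x)^{n-k-j}$ exceeds degree $n$. Indeed, for $k<l$ write ${\cal P}_{nl}^{(\alpha,\beta)}(x)=\sum_{m=l}^{n}a_{m}x^{m}$; expanding the inner product by linearity, each term $\int_{0}^{1}x^{\alpha}(1-x)^{\beta}{\cal P}_{nk}^{(\alpha,\beta)}(x)\,x^{m}\,{\rm d}x$ vanishes by Lemma (\ref{j1aaa}), since $0\le k<l\le m\le n$. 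The case $l<k$ follows by the symmetry of the integrand in $k$ and $l$, so $h$ is diagonal.

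For the diagonal case I would apply the same expansion to a single factor: $h_{nk}^{(\alpha,\beta)}=\sum_{m=k}^{n}a_{m}\int_{0}^{1}x^{\alpha}(1-x)^{\beta}{\cal P}_{nk}^{(\alpha,\beta)}(x)\,x^{m}\,{\rm d}x$, where now the terms with $m>k$ drop out by Lemma (\ref{j1aaa}) and only $a_{k}\int_{0}^{1}x^{\alpha}(1-x)^{\beta}{\cal P}_{nk}^{(\alpha,\beta)}(x)\,x^{k}\,{\rm d}x$ survives, with $a_{k}=\Gamma(\alpha+n+k+2)/[(n-k)!\,\Gamma(\alpha+2k+2)]$ read off from (\ref{j1aa}). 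Into this remaining integral I substitute Rodrigues' formula (\ref{j1}); the factors $x^{\alpha}(1-x)^{\beta}$ and $x^{-\alpha-k-1}(1-x)^{-\beta}$ combine with the extra $x^{k}$ to leave $\tfrac{1}{(n-k)!}\int_{0}^{1}x^{-1}\tfrac{{\rm d}^{n-k}}{{\rm d}x^{n-k}}\bigl(x^{\alpha+n+k+1}(1-x)^{\beta+n-k}\bigr)\,{\rm d}x$. Integrating by parts $n-k$ times transfers all derivatives onto $x^{-1}$; since $\tfrac{{\rm d}^{n-k}}{{\rm d}x^{n-k}}x^{-1}=(-1)^{n-k}(n-k)!\,x^{-(n-k)-1}$, the sign cancels the $(-1)^{n-k}$ from the parts while the factorial cancels the prefactor $\tfrac{1}{(n-k)!}$, and the quantity collapses to the Beta integral $\int_{0}^{1}x^{\alpha+2k}(1-x)^{\beta+n-k}\,{\rm d}x=\Gamma(\alpha+2k+1)\Gamma(\beta+n-k+1)/\Gamma(\alpha+\beta+n+k+2)$.

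The step I expect to be the real work is justifying that every boundary term in the repeated integration by parts vanishes. At the $(j{+}1)$-st stage the boundary term is a product of a power $x^{-j-1}$ with a derivative of $g(x)=x^{\alpha+n+k+1}(1-x)^{\beta+n-k}$ of order at most $n-k-1$: near $x=0$ that derivative is $O(x^{\alpha+2k+j+2})$, so the product is $O(x^{\alpha+2k+1})\to 0$ because $\alpha>-1$ and $k\ge 0$; near $x=1$ it is $O\bigl((1-x)^{\beta+j+1}\bigr)\to 0$ because $\beta>-1$. This is precisely where the hypotheses $\alpha>-1,\ \beta>-1$ are used. Assembling the pieces gives $\int_{0}^{1}x^{\alpha}(1-x)^{\beta}{\cal P}_{nk}^{(\alpha,\beta)}(x)\,x^{k}\,{\rm d}x=\Gamma(\alpha+2k+1)\Gamma(\beta+n-k+1)/\Gamma(\alpha+\beta+n+k+2)$, and multiplying by $a_{k}$ and using $\Gamma(\alpha+2k+2)=(\alpha+2k+1)\Gamma(\alpha+2k+1)$ yields (\ref{j2}); in particular $h_{nk}^{(\alpha,\beta)}>0$, so the system is genuinely orthogonal.
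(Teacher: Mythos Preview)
Your argument is correct and follows precisely the route sketched in the paper: the off-diagonal case is reduced to the lemma on orthogonality against monomials, and for the diagonal term you substitute Rodrigues' formula (\ref{j1}) together with the leading coefficient from (\ref{j1aa}) and integrate by parts $n-k$ times down to a Beta integral. The only thing you add beyond the paper's terse proof is the explicit verification that each boundary contribution vanishes, which is exactly where the hypotheses $\alpha>-1$, $\beta>-1$ enter.
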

Here $\delta_{kl}$ is the Kronecker delta.
\begin{proof}
Since orthogonality is verified by Lemma 1.1, we only need to consider the case $l=k$. Substituting (\ref{j1}) and (\ref{j1aa}) to (\ref{j0}) and integrating by parts we calculate $h_{nk}^{(\alpha,\beta)}$. Substituting $k=0$ and $k=n$ to $h_{nk}^{(\alpha,\beta)}$ we confirm the restrictions on $\alpha$ and $\beta$ in (\ref{jd}).
\qquad\end{proof}

\begin{corollary} For $p \in {\mathbb N}$ orthogonality relations (\ref{j1aaa})~-~ (\ref{j2}) hold the invariance
\[
{h}_{n+p,k+p}^{(\alpha-2p,\beta)}={h}_{nk}^{(\alpha,\beta)},
\]
and
\[
x^p{\cal P}_{nk}^{(\alpha,\beta)}(x)={\cal P}_{n+p,k+p}^{(\alpha-2p,\beta)}(x).
\]
\end{corollary}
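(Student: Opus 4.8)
The plan is to read off both identities directly from the defining Rodrigues-type formula~(\ref{j1}) and from the closed form~(\ref{j2}), by tracking how the substitution $(\alpha,\beta,n,k)\mapsto(\alpha-2p,\beta,n+p,k+p)$ acts on each ingredient. First I would note that the order of differentiation $n-k$ is invariant, since $(n+p)-(k+p)=n-k$; likewise the two exponents appearing \emph{inside} the derivative in~(\ref{j1}), namely $\alpha+n+k+1$ and $\beta+n-k$, are each unchanged, because $(\alpha-2p)+(n+p)+(k+p)+1=\alpha+n+k+1$ and $\beta+(n+p)-(k+p)=\beta+n-k$. The only factor that moves is the outer power of $x$: the exponent $-\alpha-k-1$ becomes $-(\alpha-2p)-(k+p)-1=-\alpha-k-1+p$. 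Hence~(\ref{j1}) evaluated at the shifted parameters equals $x^p$ times~(\ref{j1}) at the original parameters, which is exactly $x^p{\cal P}_{nk}^{(\alpha,\beta)}(x)={\cal P}_{n+p,k+p}^{(\alpha-2p,\beta)}(x)$.

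For the norm invariance I would substitute the same shift into~(\ref{j2}) and check the five ingredients one at a time: $\alpha+2k+1$, $\alpha+n+k+2$, $\beta+n-k+1$, $(n-k)!$, and $\alpha+\beta+n+k+2$ are all fixed by the substitution (every $2p$ contributed by $k\mapsto k+p$, or split between $n\mapsto n+p$ and $k\mapsto k+p$, is cancelled by $\alpha\mapsto\alpha-2p$), so $h_{n+p,k+p}^{(\alpha-2p,\beta)}=h_{nk}^{(\alpha,\beta)}$. Equivalently, and more transparently, one may combine the first identity with Theorem~1.2: using $x^p{\cal P}_{nk}^{(\alpha,\beta)}(x)={\cal P}_{n+p,k+p}^{(\alpha-2p,\beta)}(x)$ and $x^p{\cal P}_{nl}^{(\alpha,\beta)}(x)={\cal P}_{n+p,l+p}^{(\alpha-2p,\beta)}(x)$,
\[
\int_0^1 x^{\alpha-2p}(1-x)^{\beta}{\cal P}_{n+p,k+p}^{(\alpha-2p,\beta)}(x){\cal P}_{n+p,l+p}^{(\alpha-2p,\beta)}(x)\,{\rm d}x
=\int_0^1 x^{\alpha}(1-x)^{\beta}{\cal P}_{nk}^{(\alpha,\beta)}(x){\cal P}_{nl}^{(\alpha,\beta)}(x)\,{\rm d}x,
\]
which equals $h_{nk}^{(\alpha,\beta)}\delta_{kl}$ by~(\ref{j0})--(\ref{j2}); this simultaneously shows that the orthogonality relations themselves carry over under the shift.

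I do not expect a genuine obstacle; the one point worth a remark is that the shifted first parameter $\alpha-2p$ need not satisfy $\alpha-2p>-1$, so the shifted family $\{{\cal P}_{n+p,k+p}^{(\alpha-2p,\beta)}\}$ is not, in general, covered by the hypotheses of the Theorem. The substitution argument nonetheless goes through as a formal identity of the defining expressions, and the integral displayed above still converges because, after applying the first identity, the effective weight is $x^{\alpha}(1-x)^{\beta}$ with $\alpha>-1$ and $\beta>-1$; thus the statement is safe as written, being an identity between polynomials and their weighted integrals rather than a claim about a new orthogonal system.
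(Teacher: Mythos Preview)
Your argument is correct. The paper states this corollary without proof, the implicit justification being precisely the direct substitution into the Rodrigues formula~(\ref{j1}) and into~(\ref{j2}) that you carry out; your additional remark about the range of $\alpha-2p$ is a sensible clarification the paper leaves unsaid.
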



Polynomials  ${\cal P}_{pnk}^{(\alpha,\beta)}(x)=x^p{\cal P}_{nk}^{(\alpha,\beta)}(x)$ are orthogonal on the interval $[0,1]$ with the weight function $x^{\alpha-2p}(1-x)^{\beta}$.

Following the proof of  Theorem 2.2  we also evaluate
\[
\int_0^1x^\alpha(1-x)^\beta {\cal P}_{nk}^{(\alpha,\beta)}(x) {\rm d} x=\frac{\Gamma(\alpha+k+1)}{k!}\frac{\Gamma(\beta+n-k+1)}{(n-k)!}\frac{n!}{\Gamma(\alpha+\beta+n+2)}.
\]

Making use of (\ref{j1}), Cauchy's integral formula for analytic functions,
and reciprocal substitutions one can obtain the integral representation
\begin{equation}
\label{j21}
{\cal P}_{nk}^{(\alpha,\beta)}(x)=\frac{1}{2\pi i}\frac{(x^{-1})^{\alpha+\beta+n+2}}{(1-x^{-1})^{\beta}}\int\limits_{C}
{\frac{z^{-(\alpha+\beta+n+k+2)}(1-z)^{\beta+n-k}}{(z-x^{-1})^{n-k+1}}} {\rm d}z,
\end{equation}
where $C$ is a closed contour encircling the point $z=x^{-1}$.

\subsection{Reciprocity}

Being constructed by the procedure of inverse orthogonalization, polynomials ${\cal P}_{nk}^{(\alpha,\beta)}(x)$ can be represented in terms of the Jacobi polynomials. Below we derive two relationships of such kind.

First  we determine the relationship between $\mbox{\boldmath ${\cal P}$}_{n}^{(\alpha,\beta)}(x)$ and  polynomials from the sequence
\[
\mbox{\boldmath $P$}_{n}^{(\alpha,\beta)}(x)=\{{P}_{nk}^{(\alpha,\beta)}(x)\}_{k=n}^{\infty},
\]
that are defined by the procedure of direct orthogonalization as
\begin{equation}
\mbox{sign}\left({P}_{nk}^{(\alpha,\beta)}(1)\right)=(-1)^{k-n}, 
\label{jsign}
\end{equation}
\begin{equation}
\int_0^1 x^\alpha(1-x)^\beta  P_{nk}^{(\alpha,\beta)}(x)P_{nl}^{(\alpha,\beta)}(x)
{\rm d}x=d_{nk}^{(\alpha,\beta)}\delta_{kl},
\label{j2b}
\end{equation}
\begin{equation}
d_{nk}^{(\alpha,\beta)}=\frac{1}{\alpha+\beta+2k+1}\frac{\Gamma(\alpha+k+n+1)\Gamma(\beta+k-n+1)}{(k-n)!\Gamma(\alpha+\beta+k+n+1)}.
\label{j2a}
\end{equation}
Since $n$ is fixed,
by verifying (\ref{jsign}), (\ref{j2b}), and (\ref{j2a}),
the polynomials $P_{nk}^{(\alpha,\beta)}(x)$ can be
immediately connected to a fixed set of the Jacobi polynomials $P_m^{(\alpha
,\beta )} (x)$ \cite{Sego}. Precisely,
\begin{equation}
\label{j3}
P_{nk}^{(\alpha,\beta)} (x)=x^nP_{k-n}^{(\alpha+2n,\beta)} (1-2x).
\end{equation}
This formula can be used directly for describing properties of $P_{nk}^{(\alpha, \beta)}(x)$,
and one of the results that follow from (\ref{j3}) is the integral representation
\begin{equation}
\label{j5}
P_{nk}^{(\alpha,\beta)} (x)=\frac{1}{2\pi i}\frac{x^{-\alpha-n}}{(1-x)^{\beta}}\int\limits_{C_1}
{\frac{z^{\alpha+k+n}(1-z)^{\beta+k-n}}{(z-x)^{k-n+1}}}{\rm d}z.
\end{equation}
Here $C_1$ is a closed contour encircling the point $z=x$. Comparison of (\ref{j21}) and (\ref{j5}) leads to
\begin{equation}
{\cal P}_{nk}^{(\alpha,\beta)}(x)=x^{-1}P_{-(n+1),-(k+1)}^{(-\alpha-\beta,\beta)}(x^{-1}),\quad 0\leq k \leq n.
\label{j6}
\end{equation}


Thus, relation of reciprocity (\ref{j6}) associates two sequences of polynomials that contain different powers $x^k$ -- i.e.,
with $0\leq k \leq n$ and with $n\leq k \leq 2n$ respectively.

Let us now consider the relationship between $\mbox{\boldmath ${\cal P}$}_{n}^{(\alpha,\beta)}(x)$ and polynomials from the sequence $\mbox{\boldmath $P$}_{0}^{(\alpha,\beta)}(x)$ such that 
\begin{equation}
{P}_{0k}^{(\alpha,\beta)}(x)=P_{k}^{(\alpha,\beta)}(1-2x).
\label{jj1}
\end{equation}
Shifted Jacobi polynomials (\ref{jj1}) are orthogonal on the interval $[0,1]$ with the weight function $x^\alpha(1-x)^\beta$. The polynomials can also be defined by the Rodrigues formula as
\begin{equation}
{P}_{0k}^{(\alpha,\beta)}(x)=\frac{x^{-\alpha}(1-x)^{-\beta}}{k!}\frac{{\rm d}^{k}}{{\rm d}x^{k}}(x^{\alpha+k}(1-x)^{\beta+k}).
\label{jj2}
\end{equation}
Comparing (\ref{j1}) and (\ref{jj2}) we find that
\begin{equation}
{\cal P}_{nk}^{(\alpha,\beta)}(x)=x^{k}{P}_{0,n-k}^{(\alpha+2k+1,\beta)}(x).
\label{jj3}
\end{equation}

\subsection{Recurrence Relations and Properties Involving Differentiation}

Relation of reciprocity (\ref{j6}) and formula (\ref{j3}), as well as (\ref{jj3}) and (\ref{jj1}), enable employing the classical results for establishing properties of ${\cal P}_{nk}^{(\alpha,\beta)}(x)$. 

Following  (\ref{j6}) and (\ref{j3}) we obtain the three-term recurrence relation
\[
{\cal P}_{nn}^{(\alpha,\beta)}(x)=x^{n},
\]
\[
{\cal P}_{n,n-1}^{(\alpha,\beta)}(x)=(\alpha+2n)x^{n-1}-(\alpha+\beta+2n+1)x^{n},
\]
\[
(n-k+1)(\alpha+n+k+1)(\alpha+2k+2){\cal P}_{n,k-1}^{(\alpha,\beta)}(x)=(\alpha+2k+1)
\]
\[
\times[(\alpha+2k)(\alpha+2k+2)x^{-1}-(\alpha+2n+2)(\alpha+\beta+2k+1)-2(n-k)(n-k+1)]
\]
\[
\times{\cal P}_{nk}^{(\alpha,\beta)}(x)-(\alpha+\beta+n+k+2)(\beta+n-k)(\alpha+2k){\cal P}_{n,k+1}^{(\alpha,\beta)}(x)
\]
and the differential-difference relations 
\[
(\alpha+2k+2)x(1-x)\frac{\mbox{d}}{\mbox{d}x}{\cal P}_{n,k}^{(\alpha,\beta)}(x)=[k\alpha+2k(k+1)-(n\alpha+n^2+k^2+2n)x]
\]
\[
\times x{\cal P}_{nk}^{(\alpha,\beta)}(x)-(\alpha+\beta+n+k+2)(\beta+n-k)x{\cal P}_{n,k+1}^{(\alpha,\beta)}(x),
\]
\[
(\alpha+2k)x(1-x)\frac{\rm d}{{\rm d}x}{\cal P}_{n,k}^{(\alpha,\beta)}(x)=[-(\alpha+2k)(\alpha+k+1)-\beta(\alpha+2k)
\]
\[
+((n+1)(\alpha+\beta+n+1)+(\alpha+k)(\alpha+\beta+k)+\alpha+2k)x]x{\cal P}_{nk}^{(\alpha,\beta)}(x)
\]
\[
+(n-k+1)(\alpha+n+k+1)x{\cal P}_{n,k-1}^{(\alpha,\beta)}(x).
\]
Following  (\ref{jj3}) and (\ref{jj1}) we find the differentiation formula
\begin{equation}
\label{dx}
\frac{\rm d}{{\rm d}x}{\cal P}_{nk}^{(\alpha,\beta)}(x)-kx^{-1}{\cal P}_{nk}^{(\alpha,\beta)}(x)=-(\alpha + \beta +n+k+2){\cal P}_{n-1,k}^{(\alpha+1,\beta+1)}(x)
\end{equation}
for  $k<n$, and the differential equation 
\[
x^2(1-x)y^{\prime\prime}(x)+x[\alpha+2-(\alpha+\beta+3)x]y^{\prime}(x)
\]
\begin{equation}
-[k(\alpha+k+1)-n(\alpha+\beta+n+2)x]y(x)=0
\label{de1}
\end{equation}
that has a solution $y={\cal P}_{nk}^{(\alpha,\beta)}(x)$.

Let us consider the case $k=0$. It follows  from (\ref{jj3}) that
\begin{equation}
{\cal P}_{n0}^{(\alpha,\beta)}(x)={P}_{0n}^{(\alpha+1,\beta)}(x), \quad \alpha>-2, \quad \beta>-1.
\label{al2}
\end{equation}
Thus, the shifted Jacobi polynomials with translated $\alpha$,  ${P}_{0n}^{(\alpha+1,\beta)}(x)$, are a subset of the alternative Jacobi polynomials 
${\cal P}_{nk}^{(\alpha,\beta)}(x)$. Relation (\ref{al2}) can also be directly traced from equation  (\ref{de1}). For $k=0$ the degrees of the polynomial coefficients in (\ref{de1}) are reduced by one, and this makes ${\cal P}_{n0}^{(\alpha,\beta)}(x)$ the sequence with classical properties. 

For $k>0$ polynomials ${\cal P}_{nk}^{(\alpha,\beta)}(x)$ are less perfect. By considering property (\ref{dx}), or by differentiating  (\ref{de1}), we find that the first derivative of a ${\cal P}_{nk}^{(\alpha,\beta)}(x)$ 
is not a constant multiple of a ${\cal P}_{n-1,k-1}^{(\alpha^{\prime},\beta^{\prime})}(x)$ with an $(\alpha^{\prime},\beta^{\prime})$, and so $d{\cal P}_{nk}^{(\alpha,\beta)}(x)/dx$ do not belong to the original class of polynomials. Still, other properties of  ${\cal P}_{nk}^{(\alpha,\beta)}(x)$ described above are similar to properties of the classical orthogonal polynomials.

\section{Singular Orthogonality and Marginal Sequences}

The restriction $\alpha>-1$ for the the weight function $w(x;\alpha,\beta)=x^{\alpha}(1-~x)^{\beta}$ in (\ref{j1aaa}), (\ref{j0})  is stronger than the restriction on $\alpha$ in (\ref{al2}). Let $\alpha^{\prime}=\{\alpha :\quad  -2<\alpha\leq-1\}$. In this exceptional case the polynomial ${\cal P}_{n0}^{(\alpha^{\prime},\beta)}(x)$ is not integrable with respect to the weight function $w(x;\alpha^{\prime},\beta)$
and therefore non-normalizable. However,  it is consistent with orthogonality relations (\ref{j1aaa}),  (\ref{j0}) for $l>~0$. Thus,  ${\cal P}_{n0}^{(\alpha^{\prime},\beta)}(x)$   is a singular term of the sequence $\mbox{\boldmath ${\cal P}$}_{n}^{(\alpha^{\prime},\beta)}(x)$, which we call a marginal system. 

Assuming $k>0$ one can employ the sequences ${\cal P}_{nk}^{(\alpha^{\prime},\beta)}(x)$ for approximation of a continuous function $f(x)$ on the interval $[0,1]$. Such an approximation is effective near $x=0$ only if $f(0)=0$. 

Below we describe two noteworthy orthogonal sequences that hold singular terms. 

\subsection{A-Kind Orthogonal Polynomials} 

Considering (\ref{j1}) we find  that
${\cal P}_{n0}^{(-1,0)}(x)$ are the shifted Legendre polynomials. This classical set of polynomials is a singular subset of the system  
\begin{equation}
\mbox{\boldmath ${\mathscr A}$}_{n}(x)=\mbox{\boldmath ${\cal P}$}_{n}^{(-1,0)}(x), \quad
\mbox{\boldmath ${\mathscr A}$}_{n}(x)=
\{{\mathscr A}_{nk}(x)\}_{k=n}^{0}, \quad n \in  {\mathbb N}.
\label{lk0}
\end{equation}
The system $\mbox{\boldmath ${\mathscr A}$}_{n}(x)$ was introduced in \cite{VC1}, and it is the first example of an inversely orthogonalized  sequence of monomials $\{x^k\}_{k=0}^n$.

The polynomials ${\mathscr A}_{nk}(x)$ obey the orthogonality relations
\[
\int_0^1\frac{1}{x}
{\mathscr A}_{nk}(x)
{\mathscr A}_{nl}(x){\rm d}x=\frac{\delta_{kl}}{k+l},\quad k=0,1,...,n,\quad l=1,2,...,n
\]
and 
\[
\int_0^1\frac{1}{x}
{\mathscr A}_{nk}(x){\rm d}x=\frac{1}{k},\quad k=1,...,n.
\]
\begin{figure}[htbp]
\centerline{\includegraphics[height=36mm, width=60mm]{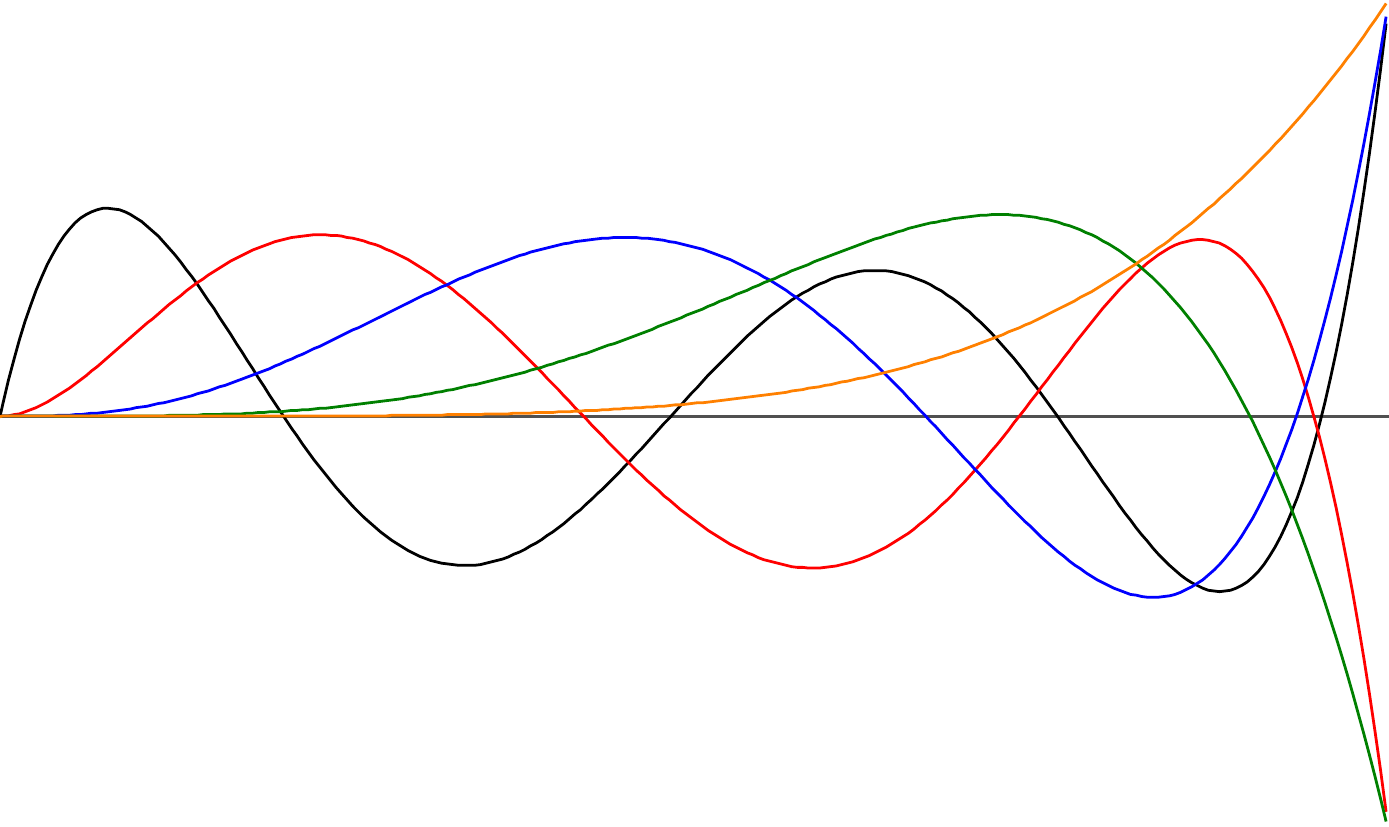}}
\caption{A-kind orthogonal polynomials: $n=5, k=1-5$.}
\normalsize
\end{figure}
They can be calculated using the expansion
\[
{\mathscr A}_{nk} (x)=\sum\limits_{j=0}^{n-k} {(-1)^j} \left( {\begin{array}{c}
 n-k\\
\mbox{ }j \\
\end{array}} \right)\left({\begin{array}{c}
 n+k+j\\
\mbox{ }n-k\\
\end{array}}\right)x^{k+j},\mbox{ }k=0,1,...,n,
\]
or following the three-term recurrence relation
\[
{\mathscr A}_{nn}(x)=x^{n},\quad {\mathscr A}_{n,n-1}(x)=(2n-1)x^{n-1}-2nx^{n},
\]
\[
(2k+1)(n+k)(n-k+1){\mathscr A}_{n,k-1}(x)
\]
\[
=2k[(2k-1)(2k+1)x^{-1}-2(n^{2}+k^{2}+n)]{\mathscr A}_{nk}(x)
\]
\[
-(2k-1)(n-k)(n+k+1){\mathscr A}_{n,k+1}(x).
\]

They satisfy the differential-difference relations
\[
(2k+1)x(1-x)\frac{\rm d}{{\rm d}x}{\mathscr A}_{n,k}(x)
\]
\[
=[2k^2+1-(n^{2}+k^{2}+2n)x]x{\mathscr A}_{nk}(x)-(n-k)(n+k+1)x{\mathscr A}_{n,k+1}(x),
\]
and
\[
(2k-1)x(1-x)\frac{\rm d}{{\rm d}x}{\mathscr A}_{n,k}(x)
\]
\[
=[-k(2k-1)+(n^{2}+k^{2}+n)x]x{\mathscr A}_{nk}(x)+(n+k)(n-k+1)x{\mathscr A}_{n,k-1}(x).
\]
We also find that $y(x)={\mathscr A}_{n,k}(x)$ is a solutions to the differential equation
\[
x^{2}(1-x)y^{\prime\prime}(x)+x(1-2x)y^{\prime}(x)-[k^{2}-n(n+1)x]y(x)=0.
\]

\subsection{T-Kind Orthogonal Polynomials} 

From (\ref{j1}) it follows that 
$n!/(n-1/2)_{n}\times{\cal P}_{n0}^{(-\frac{3}{2},-\frac{1}{2})}(x)$ are the (shifted) Chebyshev  polynomials. This classical set of highly demanded in approximation theory orthogonal polynomials is a singular subset of the system of polynomials
\begin{equation}
\mbox{\boldmath ${\mathscr T}$}_{n}(x)=\{{\mathscr T}_{nk}(x)\}_{k=n}^{0},\quad
{\mathscr T}_{nk}(x)=c_{nk}{\cal P}_{nk}^{(-\frac{3}{2},-\frac{1}{2})}(x),
\label{tlk0}
\end{equation}
where $c_{nk}=(n-k)!/(n-k-1/2)_{n-k}$ and $n \in  {\mathbb N}$.

The polynomials obey the orthogonality relation
\begin{equation}
\label{tlk1}\quad
\int_0^1\frac{{\mathscr T}_{nk}(x){\mathscr T}_{nl}(x)
}{x\sqrt{x(1-x)}}{\rm d}x=\frac{1}{2(k+l)-1}\frac{(2n-k-l)!!}{(2n-k-l-1)!!}\frac{(2n+k+l-1)!!}{(2n+k+l-2)!!}\pi\delta_{kl},
\end{equation}
\[
k=0,1,...,n,\quad l=1,2,...,n.
\]
 and
\[
\int_0^1\frac{{\mathscr T}_{nk}(x)
}{x\sqrt{x(1-x)}}{\rm d}x=\frac{(2k-3)!!}{(2k)!!}2n\pi,\quad k=1,2...n.
\]
\begin{figure}[htbp]
\centerline{\includegraphics[height=36mm, width=60mm]{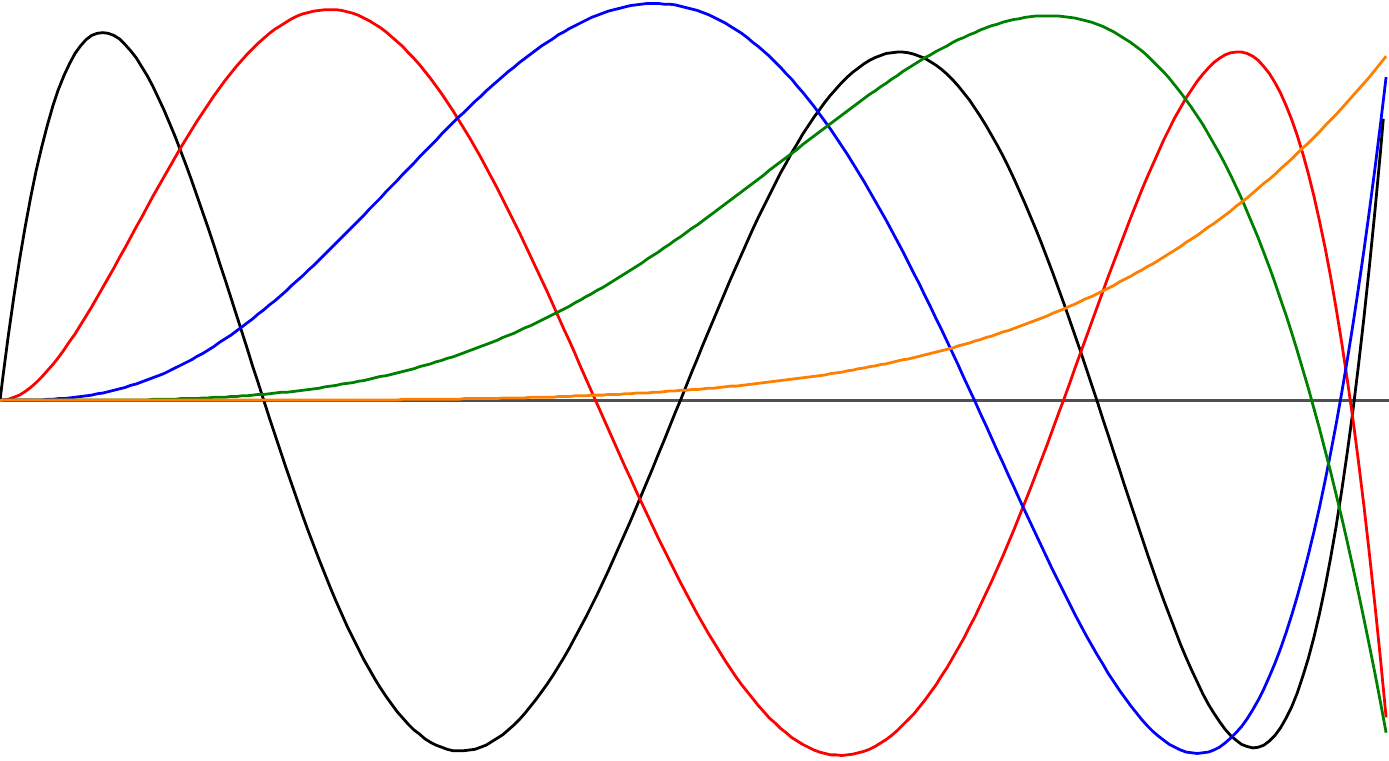}}
\caption{T-kind orthogonal polynomials: $n=5, k=1-5$.}
\normalsize
\end{figure}
They can be calculated using the three-term recurrence relation
\[
{\mathscr T}_{nn}(x)=x^{n},\quad {\mathscr T}_{n,n-1}(x)=(4n-3)x^{n-1}-(4n-2)x^{n},
\]
\[
[2(n+k)-1][2(n-k)+1](4k+1){\mathscr T}_{n,k-1}(x)
\]
\[
=(4k-1)[(4k-3)(4k+1)x^{-1}-2(4n^{2}+4k^{2}-2k-1)]{\mathscr T}_{nk}(x)
\]
\[
-4(n-k)(n+k)(4k-3){\mathscr T}_{n,k+1}(x).
\]
We also find that $y(x)={\mathscr T}_{n,k}(x)$ is a solution to the differential equation
\[
x^2(1-x)y^{\prime\prime}(x)+x(1/2-x)y^{\prime}(x)-[k(k-1/2)-n^2x]y(x)=0.
\]


\section{An  $\mbox{\boldmath $(\alpha,\beta)$}$-Parameterized Orthogonal Exponential Polynomials on the Semi-Axis}

The classical orthogonal polynomials have a property of discrete orthogonality and the alternative orthogonal polynomials have it also  (see, for example, \cite{VC4} \footnote[1]{Formula (3.5) on page 21 in \cite{VC4} have to be finalized as
\[
w_s=-\frac{2}{n(n+2)}\cdot \frac{1}{x_s^2{\cal P}_{n1} (x_s){\cal P}'_{n0} (x_s)}.
\]
}). 
Let us consider the sequence 
\begin{equation}
\mbox{exp}(-kt),\quad k \in {\mathbb N}.
\label{e00}
\end{equation}
Since 1 is not the term of the sequence, direct orthogonalization of (\ref{e00}) on the semi-axis with a chosen weight function does not result in Gauss' type quadrature and, consequently, in the discrete orthogonality of the sequence. Thus, inverse orthogonalization is the only procedure for constructing the exponential polynomials possessing such a property (see, for example, \cite{VC2}). This makes the set of orthogonal functions introduced below a unique system.

Let $\mbox{\boldmath ${\cal E}$}_{n}^{(\alpha,\beta)}(t)=\{{\cal E}_{nk}^{(\alpha,\beta)}(t)\}_{k=n}^1$ be the polynomials of exponential function defined as
\begin{equation}
{\cal E}_{nk}^{(\alpha,\beta)}(t)={\cal P}_{nk}^{(\alpha-1,\beta)}(e^{-t}).
\label{e0}
\end{equation}
From (\ref{e0}) it immediately follows that $\mbox{\boldmath ${\cal E}$}_{n}^{(\alpha,\beta)}(t)$ is the orthogonal system, such that
\[
\int_0^{\infty} e^{-\alpha t}(1-e^{-t})^\beta  {\cal E}_{nk}^{(\alpha,\beta)}(t){\cal E}_{nl}^{(\alpha,\beta)}(t)
{\rm d}t
\]
\begin{equation}
=\frac{1}{\alpha+2k}\frac{\Gamma(\alpha+n+k+1)\Gamma(\beta+n-k+1)}{(n-k)!\Gamma(\alpha+\beta+n+k+1)}\delta_{kl}
\label{e1}
\end{equation}
for $\alpha>-1$, $\beta>-1$. 

Properties of ${\cal E}_{nk}^{(\alpha,\beta)}(t)$ 
that are of interest for computations can be easily derived from the results of  previous sections. In particular,
(\ref{e1}) leads to the three-term recurrence relationship,  
which sets the associated orthogonal function ${\cal E}_{n0}^{(\alpha,\beta)}(t)$. Since ${\cal E}_{n0}^{(\alpha,\beta)}(t)$  is not integrable on the semi-axis with the weight function 1, we do not include it in $\mbox{\boldmath ${\cal E}$}_{n}^{(\alpha,\beta)}(t)$.
Functions $\left(\mbox{\boldmath ${\cal E}$}_{n}^{(\alpha,\beta)}\cup{\cal E}_{n0}^{(\alpha,\beta)}\right)(t)$ form the system with nice properties, because the zeros of  ${\cal E}_{n0}^{(\alpha,\beta)}(t)$ are the abscissas of Gauss' type quadratures for the exponential functions on the semi-axis.

In the next subsection we present two systems that are the exponential form of polynomials ${\mathscr A}_{nk}(x)$ and ${\mathscr T}_{nk}(x)$.

\subsection{Two Systems}

Let $\alpha=\beta=0$. Following (\ref{e0}) and (\ref{lk0}) we define
\[
{\cal E}_{nk}(t)={\mathscr A}_{nk}(e^{-t}).
\]
The orthogonal exponential polynomials ${\cal E}_{nk}(t)$ were first introduced in \cite{VC1}. Since they are of interest for approximation of a function in $L_2[0,\infty)$,  we repeat some properties of ${\mathscr A}_{nk}(x)$ in the exponential form as follows
\[
\int_0^{\infty}{\cal E}_{nk}(t){\cal E}_{nl}(t){\rm d}t=\frac{1}{2k}\delta_{kl},\quad k,l=1,...,n,
\]
\[
\int_0^{\infty}{\cal E}_{nk}(t){\rm d}t=\frac{1}{k},\quad k=1,...,n,
\]
\[
{\cal E}_{nn}(t)=e^{-nt},
\quad {\cal E}_{n,n-1}(t)=(2n-1)e^{-(n-1)t}
-2ne^{-nt},
\]
\[
(2k+1)(n+k)(n-k+1){\cal E}_{n,k-1}(t)
\]
\[
=2k[(2k-1)(2k+1)e^t
-2(n^{2}+k^{2}+n)]{\cal E}_{nk}(t)
\]
\[
-(2k-1)(n-k)(n+k+1){\cal E}_{n,k+1}(t),\quad k=n-1,...,1,
\]
\[
\frac{\rm d}{{\rm d}t}[{\cal E}_{n,k-1}(t)+{\cal E}_{n,k}(t)]=-(k-1){\cal E}_{n,k-1}(t)+k{\cal E}_{nk}(t).
\]

Let $\alpha=\beta=-1/2$.  Following (\ref{e0}) and (\ref{tlk0}) we define
\[
{\cal E}_{nk}^{\mathscr T}(t)={\mathscr T}_{nk}(e^{-t}).
\]
For convenience, we also repeat some properties of ${\mathscr T}_{nk}(x)$ in the exponential form as follows
\[
\int_0^\infty\frac{{\cal E}_{nk}^{\mathscr T}(t){\cal E}_{nl}^{\mathscr T}(t)
}{\sqrt{e^{-t}(1-e^{-t})}}{\rm d}t=\frac{1}{4k-1}\frac{(2(n-k))!!}{(2(n-k)-1)!!}\frac{(2(n+k)-1)!!}{(2(n+k)-2)!!}\pi\delta_{kl},\quad k,l=1,...,n,
\]
\[
\int_0^{\infty}\frac{{\cal E}_{nk}^{\mathscr T}(t)
}{\sqrt{e^{-t}(1-e^{-t})}}{\rm d}t=\frac{(2k-3)!!}{(2k)!!}2n\pi,\quad k=1,...,n,
\]
\[
{\cal E}_{nn}^{\mathscr T}(t)=e^{-nt},\quad {\cal E}_{n,n-1}^{\mathscr T}(t)=(4n-3)e^{-(n-1)t}-(4n-2)e^{-nt},
\]
\[
[2(n+k)-1][2(n-k)+1](4k+1){\cal E}_{n,k-1}^{\mathscr T}(t)
\]
\[
=(4k-1)[(4k-3)(4k+1)e^{t}-2(4n^{2}+4k^{2}-2k-1)]{\cal E}_{nk}^{\mathscr T}(t)
\]
\[
-4(n-k)(n+k)(4k-3){\cal E}_{n,k+1}^{\mathscr T}(t),\quad k=n-1,...,1.
\]
The zeros of ${\cal E}_{n0}^{\mathscr T}(t)$ is easy to calculate,  and this advantage can be a motivation for approximation of a function on the semi-axis by ${\cal E}_{nk}^{\mathscr T}(t)$. 

\subsection{Discretely Almost Orthogonal Functions on the Interval $[0,1]$} 

Employing exponential polynomials ${\cal E}_{nk}^{(\alpha, \beta)}(t)$ can be of interest for approximation of a function on the interval $[0,1]$.

One may try to apply the system 
\[  
\mbox{\boldmath $\widetilde{{\cal E}}$}_{n}^{(\alpha,\beta)}(t)=\{1, \widetilde{{\cal E}}_{nk}^{(\alpha,\beta)}(t)\}_{k=1}^n,\quad
\widetilde{{\cal E}}_{nk}^{(\alpha,\beta)}(t)={\cal E}_{nk}^{(\alpha,\beta)}(\lambda_{nn}^{(\alpha,\beta)}\cdot t).
\] 
as the basis functions. Here, the scale parameter $\lambda_{nn}^{(\alpha,\beta)}$ is the maximum non-trivial zero of the zeros $\lambda_{nk}^{(\alpha,\beta)}$ of the function ${\cal E}_{n0}^{(\alpha, \beta)}(t)$  \cite{VC3}.

Another way is the use of parameters $(\alpha,\beta)$. Below we describe a formal construction of   functions that involves computation of  $(\alpha_n,\beta_n)$ for desirable distribution of the zeros  $\lambda_{nk}^{(\alpha_n,\beta_n)}$ on the interval $(0,1]$.

First, by considering $\beta=\omega\alpha$, we introduce parameter $\omega$ that sets the abscissa of the weight function maximum, $t=\mbox{ln}(1+\omega)$. Then, by subjecting choice of $\alpha_n$ to the requirements
\begin{equation}
\label{A1}
\gamma_n\leq 1\quad \mbox{and} \quad \gamma_n=\underset{\alpha_n}\max\hspace{2pt} \lambda_{nn}^{(\alpha_n, \omega_n\alpha_n)},
\end{equation}
we introduce parameter $\gamma_n$.
In general, the equality in (\ref{A1}) can be reached if $\alpha_n,\beta_n \in {\mathbb R}$; the inequality holds if $\alpha_n,\beta_n$ are selected from  the set of whole numbers or from a set of rational numbers of interest. Accordingly, for $\gamma_n=1$ we define the system of functions
\[  
\mbox{\boldmath ${\cal Z}$}_{n}^{(\omega_n)}(t)=\{1, {\cal Z}_{nk}^{(\omega_n)}(t)\}_{k=1}^n,\quad
{\cal Z}_{nk}^{(\omega_n)}(t)={\cal E}_{nk}^{(\alpha_n, \omega_n\alpha_n)}(t),
\]
and, for $\gamma_{n}<1$,
\[  
\mbox{\boldmath $\widetilde{{\cal Z}}$}_{n}^{(\omega_n)}(t)=\{1, \widetilde{{\cal Z}}_{nk}^{(\omega_n)}(t)\}_{k=1}^n,\quad
\widetilde{{\cal Z}}_{nk}^{(\omega_n)}(t)={\cal E}_{nk}^{(\alpha_n, \omega_n\alpha_n)}(\gamma_n t).
\] 
Similarly, we define the sequences ${\cal Z}_{n0}^{(\omega_n)}(t)$ and $\widetilde{{\cal Z}}_{n0}^{(\omega_n)}(t)$. Choice of $\omega_n$ sets distributions of the zeros of the functions under consideration, 
and ${\cal Z}_{n0}^{(\omega_n)}(1)=\widetilde{{\cal Z}}_{n0}^{(\omega_n)}(1)=0$. The system of functions $\left(\mbox{\boldmath $\widetilde{{\cal Z}}$}_{n}^{(\omega_n)} \cup \widetilde{{\cal Z}}_{n0}^{(\omega_n)}\right)(t)$
is easier to construct, in particular for $\alpha_n,\beta_n \in {\mathbb N}$.

An interpolating sequence of functions corresponding to approximation by Z-functions with $n=2$, $\omega_2=1$ and $\alpha_2,\beta_2 \in {\mathbb N}$ was employed  in \cite{VC3} for solving a stiff initial value problem.

\section{Conclusion} 
Making use of the algorithm of bidirectional orthogonalization we obtained two systems of orthogonal functions, i.e. the alternative Jacobi polynomials
${\cal P}_{nk}^{(\alpha,\beta)}(x)$ and the orthogonal exponential polynomials ${\cal E}_{nk}^{(\alpha, \beta)}(t)$. Algorithmic properties of the systems are very similar to   the properties of the Jacobi polynomials. This indicates that  each of the systems  may be a convenient tool for applied analysis.

In addition, we described two marginal sequences  
${\mathscr A}_{nk}(x)$ 
and  
${\mathscr T}_{nk}(x)$, 
as well as their exponential counterparts  
${\cal E}_{nk}(t)$ 
and  
${\cal E}_{nk}^{\mathscr T}(t)$. 
Similarly, other important special systems of polynomials can be derived from the alternative Jacobi polynomials. 

Adapted systems  of functions $\mbox{\boldmath $\widetilde{{\cal E}}$}_{n}^{(\alpha,\beta)}(t)$ and $\mbox{\boldmath ${\cal 
 Z}$}_{n}^{(\omega_n)}(t)$ and their extensions that are appropriate for differentiation may be of interest for {\em hp}-approximation on a finite interval.

It should be mentioned that there are regular orthogonal exponential polynomials that are often used for approximation on the semi-axis, but usually they are not considered as  special functions sui generis (say, in \cite{Spalart}).  Description of these orthogonal exponentials can be found in \cite{Jaroch}. Also, a special orthogonal sequence of exponentials that are defined on the semi-axis in a  particularly appealing form is constructed in \cite{Baxter}.

The concept that is developed in this paper is quite consistent with the maxim of C.G.J.~Jacobi: ``~Invert, always invert~". Many years ago we unknowingly subscribed to this dictum, and now we recognize its notability \cite{Nashed}.

\end{document}